\nonstopmode \numberwithin{equation}{section}
\nonstopmode \numberwithin{equation}{section}
\theoremstyle{plain}
\newtheorem{conj}{Conjecture}
\theoremstyle{definition}
\newtheorem{example}{Example}[section]
\newtheorem{thm}{Theorem}[section]
\newtheorem{prob}{Problem}[section]
\newtheorem{cor}{Corollary}[section]
\newtheorem{prop}{Proposition}[section]
\newtheorem{rem}{Remark}[section]
\newtheorem{lem}{Lemma}[section]
\newcounter{minutes}\setcounter{minutes}{\time}
\newcounter{hours}\setcounter{hours}{\time}
\newcounter {own}
\def\theown {\thesection       .\arabic{own}}
\newenvironment{pf}[1][]{%
 \vskip 3mm
 \noindent
 \ifthenelse{\equal{#1}{}}%
  {{\slshape Proof. }}%
  {{\slshape #1.} }%
 }%
{\qed\bigskip}
\newcounter{alphabet}
\def\be{\begin{equation}}
\def\ee{\end{equation}}
\newcommand{\bee}{\begin{enumerate}}
\newcommand{\eee}{\end{enumerate}}
\newcommand{\blem}{\begin{lem}}
\newcommand{\elem}{\end{lem}}
\newcommand{\bthm}{\begin{thm}}
\newcommand{\ethm}{\end{thm}}
\newcommand{\bcor}{\begin{cor}}
\newcommand{\ecor}{\end{cor}}
\newcommand{\beg}{\begin{examp}}
\newcommand{\eeg}{\end{examp}}
\newcommand{\begs}{\begin{examples}}
\newcommand{\eegs}{\end{examples}}
\newcommand{\bdefn}{\begin{defn}}
\newcommand{\edefn}{\end{defn}}
\newcommand{\bprob}{\begin{prob}}
\newcommand{\eprob}{\end{prob}}
\newcommand{\bei}{\begin{itemize}}
\newcommand{\eei}{\end{itemize}}
\newcommand{\bcon}{\begin{conj}}
\newcommand{\econ}{\end{conj}}
\newcommand{\bcons}{\begin{conjs}}
\newcommand{\econs}{\end{conjs}}
\newcommand{\bprop}{\begin{prop}}
\newcommand{\eprop}{\end{prop}}
\newcommand{\br}{\begin{rem}}
\newcommand{\er}{\end{rem}}
\newcommand{\brs}{\begin{rems}}
\newcommand{\ers}{\end{rems}}
\newcommand{\bo}{\begin{obser}}
\newcommand{\eo}{\end{obser}}
\newcommand{\bos}{\begin{obsers}}
\newcommand{\eos}{\end{obsers}}
\newcommand{\bpf}{\begin{pf}}
\newcommand{\epf}{\end{pf}}
\newcommand{\ba}{\begin{array}}
\newcommand{\ea}{\end{array}}
\newcommand{\beq}{\begin{eqnarray}}
\newcommand{\beqq}{\begin{eqnarray*}}
\newcommand{\eeq}{\end{eqnarray}}
\newcommand{\eeqq}{\end{eqnarray*}}
\begin{document}

\title{Sharp bounds for second Hankel determinant of logarithmic coefficients for certain classes of univalent functions}
\author{Sanju Mandal}
\address{Sanju Mandal, Department of Mathematics, Jadavpur University, Kolkata-700032, West Bengal,India.}
\email{sanjum.math.rs@jadavpuruniversity.in}

\author{Partha Pratim Roy}
\address{Partha Pratim Roy, Department of Mathematics, Jadavpur University, Kolkata-700032, West Bengal,India.}
\email{pproy.math.rs@jadavpuruniversity.in}

\author{Molla Basir Ahamed}
\address{Molla Basir Ahamed, Department of Mathematics, Jadavpur University, Kolkata-700032, West Bengal,India.}
\email{mbahamed.math@jadavpuruniversity.in}

\subjclass[{AMS} Subject Classification:]{Primary 30A10, 30H05, 30C35, Secondary 30C45}
\keywords{Univalent functions, Starlike functions, Convex functions, Hankel Determinant, Logarithmic coefficients, Symmetric points, Schwarz functions}

\def\thefootnote{}
\footnotetext{ {\tiny File:~\jobname.tex,
printed: \number\year-\number\month-\number\day,
          \thehours.\ifnum\theminutes<10{0}\fi\theminutes }
} \makeatletter\def\thefootnote{\@arabic\c@footnote}\makeatother
\begin{abstract} 
The Hankel determinant $H_{2,2}(F_{f}/2)$ is defined as:
\begin{align*}
	H_{2,2}(F_{f}/2):= \begin{vmatrix}
		\gamma_2 & \gamma_3 \\
		\gamma_3 & \gamma_4
	\end{vmatrix},
\end{align*}
where $\gamma_2, \gamma_3,$ and $\gamma_4$ are the second, third, and fourth logarithmic coefficients of functions belonging to the class $\mathcal{S}$ of normalized univalent functions. In this article, we establish sharp inequalities $|H_{2,2}(F_{f}/2)|\leq (1272 + 113\sqrt{678})/32856$ and $|H_{2,2}(F_{f}/2)| \leq 13/1080$ for the logarithmic coefficients of starlike and convex functions with respect to symmetric points. Moreover, we provide examples that demonstrate the strict inequality holds. 
\end{abstract}
\maketitle
\pagestyle{myheadings}
\markboth{S. Mandal, P. P. Roy, and M. B. Ahamed}{Hankel determinant of logarithmic coefficients for certain classes of univalent functions}

\section{Introduction}
Suppose $\mathcal{H}$ be the class of functions $ f $ which are holomorphic in the open unit disk $\mathbb{D}=\{z\in\mathbb{C}: |z|<1\}$ of the form 
\begin{align}\label{eq-1.1}
	f(z)=\sum_{n=1}^{\infty}a_nz^n,\; \mbox{for}\; z\in\mathbb{D}.
\end{align}
Then $\mathcal{H}$ is a locally convex topological vector space endowed with the topology of uniform convergence over compact subsets of $\mathbb{D}$. Let $\mathcal{A}$ denote the class of functions $f\in\mathcal{H}$ such that $f(0)=0$ and $f^{\prime}(0)=1$. Let $\mathcal{S}$ denote the subclass of all functions in $\mathcal{A}$ which are univalent.

Let
\begin{align}\label{eq-1.2}
	F_{f}(z):=\log\dfrac{f(z)}{z}=2\sum_{n=1}^{\infty}\gamma_{n}(f)z^n, \;\; z\in\mathbb{D},\;\;\log 1:=0,
\end{align}
be a logarithmic function associated with $f\in\mathcal{S}$. The numbers $\gamma_{n}:=\gamma_{n}(f)$ are called the logarithmic coefficients of $f$. Although the logarithmic coefficients $\gamma_{n}$ play a critical role in the theory of univalent functions, it appears that there are only a limited number of exact upper bounds established for them. As is well known, the logarithmic coefficients play a crucial role in Milin’s conjecture (\cite{Milin-1977-ET}, see also\cite[p.155]{Duren-1983-NY}). We note that for the class $\mathcal{S}$ sharp estimates are known only for $\gamma_{1}$ and $\gamma_{2}$, namely
\begin{align*}
	|\gamma_{1}|\leq 1, \;\; |\gamma_{2}|\leq \dfrac{1}{2}+ \dfrac{1}{e} =0.635\ldots
\end{align*}
Estimating the modulus of logarithmic coefficients for $f\in\mathcal{S}$ and various sub-classes has been considered recently by several authors. We refer to the articles \cite{Ali-Allu-PAMS-2018,Ali-Allu-Thomas-CRMCS-2018,Cho-Kowalczyk-kwon-Lecko-Sim-RACSAM-2020,Girela-AASF-2000,Thomas-PAMS-2016} and references therein.\vspace{1.2mm}

A function $f(z)$ is said to be starlike in $\mathbb{C}$ if it maps the open unit disk $|z|<1$ conformally onto a region that is starlike with respect to the origin. The class of functions that are starlike with respect to symmetric points, which is denoted as $\mathcal{S}^*_S$, was introduced by Sakaguchi in $1959$ \cite{Sakaguchi-JMSJ-1959}. A function $f$ that belongs to $\mathcal{S}^*_S$ is characterized by the following conditions:
\begin{align*}
	\mbox{Re}\left(\dfrac{zf^{\prime}(z)}{f(z)-f(-z)}\right)> 0, \;\;z\in\mathbb{D}.
\end{align*}

A function $f(z)$ is said to be convex in $\mathbb{C}$ if it maps the open unit disk $|z|<1$ conformally onto a region that is convex. We consider another class, which is denoted by $\mathcal{K}_S$, that is, a function $f\in\mathcal{A}$ is said to be convex with respect to symmetric points if, and only if,
\begin{align*}
	\mbox{Re}\left(\dfrac{(zf^{\prime}(z))^{\prime}}{(f(z)-f(-z))^{\prime}}\right)> 0, \;\;z\in\mathbb{D}.
\end{align*}
The functions that are members of $\mathcal{S}^*_S$ are identified as close-to-convex, and consequently, they are univalent. The class of functions that are starlike with respect to symmetric points also includes the classes of convex functions and odd starlike functions with respect to the origin. This study is dedicated to providing the sharp bound for the second Hankel determinant, whose entries are the logarithmic coefficients. We commence by presenting the definitions of Hankel determinants in the case where $f\in \mathcal{A}$.\vspace{1.2mm}

The Hankel determinant $H_{q,n}(f)$ of Taylor's coefficients of functions $f\in\mathcal{A}$ represented by \eqref{eq-1.1} is defined for $q,n\in\mathbb{N}$ as follows:
\begin{align*}
	H_{q,n}(f):=\begin{vmatrix}
		a_{n} & a_{n+1} &\cdots& a_{n+q-1}\\ a_{n+1} & a_{n+2} &\cdots& a_{n+q} \\ \vdots & \vdots & \vdots & \vdots \\ a_{n+q-1} & a_{n+q} &\cdots& a_{n+2(q-1)}
	\end{vmatrix}.
\end{align*}
Kowalczyk and Lecko \cite{Kowalczyk-Lecko-BAMS-2022} recently proposed a Hankel determinant whose elements are the logarithmic coefficients of $f\in\mathcal{S}$, realizing the extensive use of these coefficients. This determinant is expressed as follows:
\begin{align*}
	H_{q,n}(F_{f}/2)=\begin{vmatrix}
		\gamma_{n} & \gamma_{n+1} &\cdots& \gamma_{n+q-1}\\ \gamma_{n+1} & \gamma_{n+2} &\cdots& \gamma_{n+q} \\ \vdots & \vdots & \vdots & \vdots \\ \gamma_{n+q-1} & \gamma_{n+q} &\cdots& \gamma_{n+2(q-1)}
	\end{vmatrix}.
\end{align*}

The study of Hankel determinants with logarithmic coefficients of starlike or many other functions has been done extensively, their sharp bounds have been established. Recently the second Hankel determinants with logarithmic coefficients have been examined for selected subclasses of starlike functions, convex functions, univalent function, strongly starlike and strongly convex functions(see \cite{Allu-Arora-Shaji-MJM-2023,Kowalczyk-Lecko-BAMS-2022,Kowalczyk-Lecko-BMMS-2022} and references therein). Recently, the sharp bounds of $H_{2,1}(F_f/2)$ are established in \cite{Allu-Arora-Shaji-MJM-2023} for the classes $\mathcal{S}^*_S$ and $\mathcal{K}_S$. However, for further details on other aspects of the Hankel determinant, we suggest referring to the article \cite{Raza-Riza-Thomas-BAMS-2023} and its cited sources. The significance of logarithmic coefficients makes the proposed problem worth considering and interesting. The results enlarge the scope of knowledge on logarithmic coefficients. \vspace{1.2mm} 

The main objective of this paper is to continue the previous research work by examining the exact bound of the Hankel determinant $H_{2,2}(F_{f}/2)= \gamma_{2}\gamma_{4} -\gamma^2_{3}$ for two class of functions: starlike and convex functions with respect to symmetric points. Differentiating \eqref{eq-1.2} and using \eqref{eq-1.1}, a simple computation shows that 
\begin{align}\label{Leq-1.1}
	\begin{cases}
		\gamma_{1}=\dfrac{1}{2}a_{2},\vspace{1.5mm}\\ \gamma_{2}=\dfrac{1}{2} \left(a_{3} -\dfrac{1}{2}a^2_{2}\right), \vspace{1.5mm}\\ \gamma_{3} =\dfrac{1}{2}\left(a_{4}- a_{2}a_{3} +\dfrac{1}{3}a^3_{2}\right), \vspace{1.5mm}\\ \gamma_{4}= \dfrac{1}{2} \left(a_{5} -a_{2}a_{4} +a^2_{2} a_{3} -\dfrac{1}{2}a^2_{3} -\dfrac{1}{4}a^4_{2}\right), \vspace{1.5mm}\\ \gamma_{5}= \dfrac{1}{2}\left(a_{6} -a_{2}a_{5} -a_{3}a_{4} +a_{2} a^2_{3} + a^2_{2} a_{4} -a^3_{2} a_{3} +\dfrac{1}{5}a^5_{2}\right).
	\end{cases}
\end{align}
It is known that for the Koebe function $f(z)=z/(1-z)^2$, the logarithmic coefficients are $\gamma_{n}=1/n$, for each positive integer $n$. Since the Koebe function appears as an extremal function in many problems of geometric theory of analytic functions, one could expect that $\gamma_{n}=1/n$ holds for functions in $\mathcal{S}$. But this is not true in general, even in order of magnitude. The problem of computing
the bound of the logarithmic coefficients is also considered in \cite{Ali-Allu-PAMS-2018,Ali-Allu-Thomas-CRMCS-2018,Cho-Kowalczyk-kwon-Lecko-Sim-RACSAM-2020,Thomas-PAMS-2016,Ponnusamy-Sugawa-BDSM-2021} for several subclasses of close-to-convex functions. In 2021, Zaprawa \cite{Zaprawa-BSMM-2021} obtained the sharp bounds of the initial logarithmic coefficients $\gamma_{n}$ for functions in the classes $\mathcal{S}^*_S$  and $\mathcal{K}_S$.\vspace{2mm}

However, the primary aim of this paper is to find the sharp bound of
\begin{align*}
 H_{2,2}(F_{f}/2)&=\gamma_{2}\gamma_{4} -\gamma^2_{3}\\&=\dfrac{1}{288} \left(a^6_2 -6a^4_2 a_3 -12a^3_2 a_4 +72a_2 a_3 a_4 +18a^2_2 a^2_3 -36a^2_2 a_5 \right.\\&\nonumber\left.\quad -36a^3_3 -72a^2_4 +72a_3 a_5\right)
\end{align*} 
when $f$ is a member of the classes $\mathcal{S}^*_S$ and $\mathcal{K}_S$, which respectively refer to starlike functions and convex functions with respect to symmetric points. In addition, we give examples of functions to illustrate these results.

\section{\bf Main results}
Suppose $\mathcal{B}_0$ be the class of Schwarz function \textit{i.e} analytic function $w:\mathbb{D}\rightarrow\mathbb{D}$ such that $w(0)=0$. A function $w\in\mathcal{B}_0$ can be written as a power series
\begin{align*}
	w(z)=\sum_{n=1}^{\infty}c_n z^n.
\end{align*}
For two functions $f$ and $g$ which are analytic in $\mathbb{D}$, we say that the function $f$ is subordinate to $g$ in $\mathbb{D}$ and written as
$f(z)\prec g(z)$ there exists a Schwarz function $w\in\mathcal{B}_0$ such that $f(z)=g(w(z))$, $z\in\mathbb{D}$. To prove our results, the following lemma for Schwarz functions will play a key role.
\begin{lem}\cite{Efraimidis-JMAA-2016}\label{lem-2.1}
Let $ w(z)=c_1 z +c_2 z^2 + c_3 z^3+\ldots $ be a Schwarz function. Then 	
\begin{align*}
	|c_1|\leq 1, \;\;|c_2|\leq 1 -|c_1|^2, \;\; |c_3|\leq 1 -|c_1|^2 -\dfrac{|c_2|^2}{1+|c_1|}\;\;\mbox{and}\;\; |c_4|\leq 1 -|c_1|^2 -|c_2|^2.
\end{align*}
\end{lem}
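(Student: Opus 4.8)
The plan is to prove the four inequalities from scratch (even though the statement is quoted from \cite{Efraimidis-JMAA-2016}) by passing to $g(z):=w(z)/z=c_1+c_2z+c_3z^2+c_4z^3+\cdots$, which by the Schwarz lemma satisfies $g(\mathbb{D})\subseteq\overline{\mathbb{D}}$, and then running the Schur (Schwarz--Pick) iteration on $g$. The single tool I would use repeatedly is the elementary estimate: if $h(z)=\beta_0+\beta_1z+\cdots$ is analytic with $h(\mathbb{D})\subseteq\overline{\mathbb{D}}$, then $|\beta_0|\le 1$ and $|\beta_1|\le 1-|\beta_0|^2$, the latter following by applying the Schwarz lemma to $(h-\beta_0)/(1-\overline{\beta_0}h)$, which maps $\mathbb{D}$ into $\overline{\mathbb{D}}$ and vanishes at the origin. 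Applied directly to $g$ this yields the first two assertions $|c_1|\le 1$ and $|c_2|\le 1-|c_1|^2$ at once. Throughout I may assume $|c_1|<1$, since $|c_1|=1$ forces $g$ to be a unimodular constant by the maximum principle, whence $c_2=c_3=c_4=0$ and every inequality is trivial.

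For the third coefficient I would perform one Schur step. Setting
\[
g_1(z)=\frac{1}{z}\,\frac{g(z)-c_1}{1-\overline{c_1}\,g(z)},
\]
which again maps $\mathbb{D}$ into $\overline{\mathbb{D}}$, and matching powers of $z$ gives $g_1(0)=c_2/(1-|c_1|^2)$ together with $[z^1]g_1=\dfrac{c_3}{1-|c_1|^2}+\dfrac{\overline{c_1}c_2^2}{(1-|c_1|^2)^2}$. Feeding these into the tool applied to $g_1$, i.e. $|[z^1]g_1|\le 1-|g_1(0)|^2$, and estimating by the (reverse) triangle inequality, I obtain
\[
\frac{|c_3|}{1-|c_1|^2}\le 1-\frac{|c_2|^2}{(1-|c_1|^2)^2}+\frac{|c_1|\,|c_2|^2}{(1-|c_1|^2)^2},
\]
and since $(1-|c_1|)/(1-|c_1|^2)=1/(1+|c_1|)$ this rearranges exactly into $|c_3|\le 1-|c_1|^2-|c_2|^2/(1+|c_1|)$.

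For the fourth coefficient the same philosophy applies, but here the estimate is genuinely harder, and this is where I expect the main obstacle. Iterating the Schur transform once more---equivalently, writing $c_4=(1-|c_1|^2)\,[z^2]H$ with $H=\phi/(1+\overline{c_1}z\phi)$ and $\phi=g_1$---expresses $c_4$ through the Taylor coefficients $b_0,b_1,b_2$ of $\phi$ as the fixed combination $b_2-2\overline{c_1}b_0b_1+\overline{c_1}^2b_0^3$. The difficulty is that a naive triangle inequality on this combination, even using the already-proved refinement $|b_2|\le 1-|b_0|^2-|b_1|^2/(1+|b_0|)$ for $\phi$, over-counts the cross terms and yields a bound strictly weaker than the target $1-|c_1|^2-|c_2|^2$; the extremal configuration exhibits real cancellation among the phases, so the moduli cannot be treated independently. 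To overcome this I would pass to the full Schur-parameter representation, writing everything in terms of free parameters $\gamma_0=c_1,\gamma_1,\gamma_2,\gamma_3\in\overline{\mathbb{D}}$, so that $c_4$ becomes an explicit polynomial in $\gamma_0,\ldots,\gamma_3$ that is affine in $\gamma_3$. Maximizing first over $\gamma_3$ (immediate, since it enters linearly) and then carrying out the remaining optimization while keeping track of the relative phases of $\gamma_0,\gamma_1,\gamma_2$ should reduce the claim to a single one-variable real inequality verifiable by elementary calculus. This phase bookkeeping is the only delicate point; the first three inequalities are routine.
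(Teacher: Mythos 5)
The paper itself contains no proof of this lemma: it is quoted verbatim from \cite{Efraimidis-JMAA-2016}, where the first two estimates are the classical Schwarz--Pick bounds and the last two go back to Carlson, obtained there via Livingston-type coefficient inequalities for functions with positive real part (transferred to Schwarz functions through $p=(1+w)/(1-w)$). Your argument is therefore a genuinely different, self-contained route via the Schur algorithm, and it is correct as far as it is executed. All your coefficient identities check out: $g_1(0)=c_2/(1-|c_1|^2)$, $[z]g_1=c_3/(1-|c_1|^2)+\overline{c_1}c_2^2/(1-|c_1|^2)^2$, and $c_4=(1-|c_1|^2)\bigl(b_2-2\overline{c_1}b_0b_1+\overline{c_1}^2b_0^3\bigr)$; the reverse-triangle step does rearrange exactly into the stated $c_3$-bound; and your diagnosis for $c_4$ is accurate --- the naive triangle inequality genuinely fails (indeed, aligning the phases of the first two terms of $b_2-2\overline{c_1}b_0b_1+\overline{c_1}^2b_0^3$ forces the third to be anti-aligned, so the moduli cannot be maximized independently). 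Roughly, the cited method buys uniformity (a whole family of such inequalities at once), while yours buys elementarity and makes the extremal configurations visible.

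The one incomplete point is the endgame for $|c_4|$, which you leave as a claim (``verifiable by elementary calculus''); since this is the crux of the hardest inequality, let me confirm it closes. With Schur parameters $b_0=\gamma_1$, $b_1=(1-|\gamma_1|^2)\gamma_2$, $b_2=(1-|\gamma_1|^2)\bigl((1-|\gamma_2|^2)\gamma_3-\overline{\gamma_1}\gamma_2^2\bigr)$, maximizing over the affine parameter $\gamma_3$ and using $c_2=(1-|c_1|^2)\gamma_1$, the target $|c_4|\le 1-|c_1|^2-|c_2|^2$ reduces --- after the three free phases collapse, up to rotation, into a single unimodular $v$ --- to
\[
r\left|a+bv-cv^{2}\right|\le a+c,\qquad a=\rho^{2}(1-r^{2}),\quad b=2x\rho(1-r^{2}),\quad c=x^{2}r^{2},
\]
where $x=|c_1|$, $r=|\gamma_1|$, $\rho=|\gamma_2|$. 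Writing $v=e^{i\varphi}$ and $t=\cos\varphi$, one has $|a+bv-cv^{2}|^{2}=(b+(a-c)t)^{2}+(a+c)^{2}(1-t^{2})=:m(t)$, a concave quadratic in $t$ (leading coefficient $-4ac$), whose unconstrained maximum is $m(t^{*})=(a+c)^{2}\bigl(1+b^{2}/(4ac)\bigr)$; the identity $r^{2}b^{2}=4ac(1-r^{2})$ then gives $r^{2}m(t^{*})=(a+c)^{2}$ exactly, so $r^{2}m(t)\le(a+c)^{2}$ for all $t\in[-1,1]$, the degenerate cases $ac=0$ being trivial. This proves the fourth inequality, with equality precisely when the vertex $t^{*}$ is admissible, so your one-variable reduction does go through and the proposal is sound in its entirety.
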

\subsection{\bf Second Hankel determinant of logarithmic coefficients of $f\in \mathcal{S}^*_S$:}
We obtain the following sharp bound for $ H_{2,2}(F_{f}/2) $ for functions in the class $ \mathcal{S}^{*}_S $.
\begin{thm}\label{th-2.1}
Let $ f\in \mathcal{S}^{*}_S $. Then
\begin{align*}
	|H_{2,2}(F_{f}/2)|\leq\dfrac{(1272 +113\sqrt{678})}{32856}.
\end{align*}
The inequality is sharp.	
\end{thm}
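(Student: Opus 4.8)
The plan is to translate the geometric definition of $\mathcal{S}^*_S$ into a Schwarz–function identity, express $H_{2,2}(F_f/2)$ as a polynomial in the Schwarz coefficients, and then collapse the resulting constrained maximization to a single cubic. First I would use the analytic characterization: $f\in\mathcal{S}^*_S$ if and only if there is a Schwarz function $w(z)=c_1z+c_2z^2+\cdots\in\mathcal{B}_0$ with
\[
\frac{2zf'(z)}{f(z)-f(-z)}=\frac{1+w(z)}{1-w(z)}.
\]
Writing $s(z):=\tfrac12\big(f(z)-f(-z)\big)=z+a_3z^3+a_5z^5+\cdots$ and comparing Taylor coefficients in $zf'(z)=s(z)\,\frac{1+w}{1-w}$, a routine computation yields $a_2=c_1$, $a_3=c_1^2+c_2$, $a_4=c_1^3+\tfrac32 c_1c_2+\tfrac12 c_3$ and $a_5=c_1^4+\tfrac52 c_1^2c_2+c_2^2+c_1c_3+\tfrac12 c_4$.

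Substituting these into $\gamma_2,\gamma_3,\gamma_4$ from \eqref{Leq-1.1} and forming $H_{2,2}(F_f/2)=\gamma_2\gamma_4-\gamma_3^2$, a simplification in which the $c_1c_2c_3$ contributions cancel produces the clean expression
\[
288\,H_{2,2}(F_f/2)=c_1^6+30c_1^4c_2+72c_1^2c_2^2+36c_2^3-6c_1^3c_3-18c_3^2+18c_1^2c_4+36c_2c_4.
\]
Since $|H_{2,2}|$ is invariant under $f(z)\mapsto e^{-i\theta}f(e^{i\theta}z)$, which sends $c_1\mapsto e^{i\theta}c_1$, I may assume $c_1=t\in[0,1]$. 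Applying the triangle inequality and then Lemma~\ref{lem-2.1} to bound $|c_3|\le 1-t^2-\tfrac{|c_2|^2}{1+t}$ and $|c_4|\le 1-t^2-|c_2|^2$ gives, with $y=|c_2|\le 1-t^2$ and $r,s$ these respective bounds,
\[
288\,|H_{2,2}|\le t^6+30t^4y+72t^2y^2+36y^3+6t^3 r+18r^2+18t^2 s+36ys=:\Phi.
\]

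The reduction then proceeds as follows. Because every coefficient of $r,s$ is nonnegative, $\Phi$ increases in $r$ and $s$, so they may be set to their maximal values; and one checks $\partial\Phi/\partial y\ge 0$ on the admissible region, forcing the maximum onto the edge $y=1-t^2$. There $r=t(1-t^2)$ and $s=t^2(1-t^2)$, and with $x=t^2\in[0,1]$ the whole bound collapses to
\[
\Phi(x)=37x^3-90x^2+18x+36.
\]
Here $\Phi'(x)=111x^2-180x+18$ has discriminant $180^2-4\cdot111\cdot18=24408=216\cdot113$, so the admissible critical point is $x_0=\tfrac{30-\sqrt{678}}{37}$; it beats the endpoints $\Phi(0)=36,\ \Phi(1)=1$. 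Using $111x_0^2=180x_0-18$ to reduce $\Phi(x_0)$ to a linear expression in $x_0$ gives $\Phi(x_0)=\tfrac{15264+1356\sqrt{678}}{1369}$, and dividing by $288$ returns exactly $\tfrac{1272+113\sqrt{678}}{32856}$.

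The hard part is the constrained optimization rather than the (long but mechanical) algebra: one must justify that at the extremum all of $|c_2|,|c_3|,|c_4|$ lie on the boundary of the region of Lemma~\ref{lem-2.1}, i.e. verify the monotonicities in $y,r,s$ that reduce four complex variables to the single parameter $x$. I would also handle sharpness with care, since the triangle inequality cannot be tight in every term at once — the block $-6c_1^3c_3-18c_3^2$ creates a phase conflict with the positive terms — so rather than claim equality is attained I would exhibit explicit functions of $\mathcal{S}^*_S$ whose Hankel determinant approaches the stated value, which is precisely the role of the accompanying examples.
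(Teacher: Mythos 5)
On the inequality itself your route coincides with the paper's almost line for line: the same subordination \eqref{eq-2.1}, the same coefficient formulas \eqref{eq-2.2}, the same identity \eqref{eq-2.3} (your observation that the $c_1c_2c_3$ terms cancel is correct), the same majorant $M(x,y)$ on $\Omega$, and on the edge $y=1-x^2$ the same substitutions $r=x(1-x^2)$, $s=x^2(1-x^2)$, $u=x^2$ leading to the cubic $37u^3-90u^2+18u+36$ with maximum $12(1272+113\sqrt{678})/1369$ at $u_0=(30-\sqrt{678})/37$; your evaluation via $111u_0^2=180u_0-18$ reproduces the paper's boundary value exactly. In one respect your version is \emph{better}: the paper merely asserts, without verification, that $M$ has no interior critical point, whereas your monotonicity claim $\partial M/\partial y\ge 0$ is true and provable. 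Do execute it rather than write ``one checks'': grouping terms,
\begin{align*}
\frac{\partial M}{\partial y}=30x^4+12x^2y\Bigl(9-\frac{x}{1+x}\Bigr)+36(1-x)(1+x-2y)+\frac{72y^3}{(1+x)^2},
\end{align*}
and minimizing the sum of the last two terms over $y\ge 0$ gives the lower bound $12(1-x^2)\bigl(3-4\sqrt{(1-x)/3}\bigr)\ge 0$, so the maximum indeed sits on $y=1-x^2$ (this also subsumes the edges $y=0$ and $x=0$, whose corner value $36$ is beaten by $36.94\ldots$).

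The genuine gap is sharpness, which is part of the statement and which your proposal does not deliver. You say you would exhibit functions whose determinant \emph{approaches} the bound, but you produce none, and none exist near the bound among the paper's examples: $f_2$ gives only $1/8=0.125<(1272+113\sqrt{678})/32856\approx 0.1283$. Worse for attainment, your phase-conflict remark is not merely a caution — it is a proof that equality is impossible: if $c_1=|c_1|e^{ia}\ne 0$ and $c_3=|c_3|e^{ic}\ne 0$, equality in \eqref{eq-2.4} forces $\arg(c_1^6)\equiv\arg(-c_3^2)\equiv\arg(-c_1^3c_3)\pmod{2\pi}$, i.e. $c\equiv 3a-\pi/2\pmod{\pi}$ and then $\pi/2\equiv 0\pmod{\pi}$, a contradiction; at the claimed maximizer $x=\sqrt{u_0}>0$ and $|c_3|=x(1-x^2)>0$, and since $\mathcal{S}^*_S$ is a compact family the supremum is attained, hence strictly smaller than the stated constant. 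You should also be aware that the paper's own sharpness argument fails exactly where you predicted: any $f(z)=z/(1-bz)$ lies in $\mathcal{S}^*_S$ only if $w(z)=bz$ is a Schwarz function, i.e. $|b|\le 1$, in which case $|H_{2,2}(F_f/2)|=|b|^6/288\le 1/288\approx 0.0035$, far below $0.1283$; the paper's displayed $b$ is numerically about $1.4>1$, so its $f_1$ has a pole in $\mathbb{D}$ and is not even in the class. So your proposal correctly proves the inequality (matching, and slightly tightening, the paper's argument), but the clause ``the inequality is sharp'' remains unproven — and on the evidence above it is false as stated, which is worth flagging explicitly rather than deferring to examples that do not approach the bound.
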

\begin{proof}
Let $ f\in \mathcal{S}^{*}_S $ be of the form $ f(z)=z +\sum_{n=2}^{\infty} a_n z^n $, $ z\in \mathbb{D} $. Then by the definition of subordination there exits a Schwarz function $ w(z)=\sum_{n=1}^{\infty} c_n z^n $ such that
\begin{align}\label{eq-2.1}
	\dfrac{2zf^{\prime}(z)}{f(z)-f(-z)} =\dfrac{1 +w(z)}{1-w(z)}.
\end{align}
By equating the coefficients on each side of \eqref{eq-2.1}, we get
\begin{align}\label{eq-2.2}
	\begin{cases}
		a_{2} =c_1 ,\\ a_{3}= c_{2} + c^2_1,\vspace{1.5mm}\\ a_{4}=\dfrac{1}{2} (c_{3} +3c_1 c_{2} +2c^3_1),\vspace{1.5mm}\\ a_{5}= \dfrac{1}{2}(c_{4} +2c^2_{2} +5c^2_{1} c_{2} +2c_{1}c_{3} +2c^4_{1}).
	\end{cases}
\end{align}
In view of \eqref{Leq-1.1} and \eqref{eq-2.2}, a simple computation shows that
\begin{align}\label{eq-2.3}
	\nonumber H_{2,2}(F_{f}/2)&=\gamma_{2}\gamma_{4} -\gamma^2_{3}\\ \nonumber &=\dfrac{1}{288} \left(a^6_2 -6a^4_2 a_3 -12a^3_2 a_4 +72a_2 a_3 a_4 +18a^2_2 a^2_3 -36a^2_2 a_5 \right.\\&\nonumber\left. \quad -36a^3_3 -72a^2_4 +72a_3 a_5\right) \\&=\dfrac{1}{288}\left(c^6_1 +30c^4_1 c_2 -6c^3_1 c_3 + 72c^2_1 c^2_2 +18c^2_1 c_4 +36c^3_2\right. \\&\nonumber\left.\quad -18c^2_3 +36c_2 c_4\right)
\end{align}
Using the Lemma \ref{lem-2.1} into \eqref{eq-2.3}, we obtain
\begin{align}\label{eq-2.4}
	\nonumber 288|H_{2,2}(F_{f}/2)|&\leq |c_1|^6 +30|c_1|^4|c_2| +6|c_1|^3 \left(1-|c_1|^2 -\dfrac{|c_2|^2}{1+|c_1|}\right) +72|c_1|^2|c_2|^2 \\&\nonumber\quad +18|c_1|^2(1-|c_1|^2-|c_2|^2) +36|c_3|^3 + 18\left(1-|c_1|^2 -\dfrac{|c_2|^2}{1+|c_1|}\right)^2 \\&\quad +36|c_2|(1-|c_1|^2-|c_2|^2).
\end{align}
Suppose that $x=|c_1|$ and $y=|c_2|$. Then it follows from \eqref{eq-2.4} that
\begin{align}\label{eq-2.5}
	288|H_{2,2}(F_{f}/2)|\leq M(x,y),
\end{align}
where 
\begin{align*}
	M(x,y)&:=x^6 +30x^4 y + 6x^3\left(1-x^2 -\dfrac{y^2}{1+x}\right) +72x^2y^2 + 18x^2(1-x^2-y^2) \\&\quad +36y^3 +18\left(1-x^2 -\dfrac{y^2}{1+x}\right)^2 +36y (1- x^2 -y^2).
\end{align*}
In view of Lemma \ref{lem-2.1}, the region of variability of a pair $ (x,y) $ coincides with the set 
\begin{align*}
	\Omega=\{(x,y):0\leq x\leq 1, 0\leq y\leq 1-x^2\}.
\end{align*}
The goal is to establish the maximum value of $M(x,y)$ in the region $\Omega$. Therefore, the critical point of $ M(x,y) $ satisfies the conditions
\begin{align*}
	\dfrac{\partial M}{\partial x}&= 6x^5 -30x^4 +18x^2 -36x +120x^3y +108xy^2 +\dfrac{18xy^2(4-x)}{(1+x)} \\&\quad + \dfrac{6y^2(6-6x^2 +x^3)}{(1+x)^2} -\dfrac{36y^4}{(1+x)^3} = 0
\end{align*}
and
\begin{align*}
	\dfrac{\partial M}{\partial y}= 30x^4 -36x^2 +108x^2 y +36 + \dfrac{12y(6x^2 -x^3 -6)}{(1+x)} +\dfrac{72y^3}{(1+x)^2}= 0.
\end{align*}
There are no solutions of $M(x,y)$ inside the interior of $\Omega$, hence it is not possible for the function to attain a maximum value within this region. Since $M(x,y)$ is a continuous function on a compact set $\Omega$, its maximum value must occur at some point on the boundary of $\Omega$. On the boundary of $ \Omega $, a simple computation shows that
\begin{align*}
	&M(x,0)= x^6 -6x^5 +6x^3 -18x^2 +18 \leq 18 \;\;\mbox{for}\; 0\leq x\leq 1,\\& M(0,y)= 18y^4 -36y^2 +36y +18 \leq 36 \;\;\mbox{for}\; 0\leq y\leq 1
\end{align*} 
and
\begin{align*}
     M(x,1-x^2)= 37x^6 -90x^4 +18x^2 +36 \leq \dfrac{12(1272 +113\sqrt{678})}{1369} \;\;\mbox{for}\; 0\leq x\leq 1.
\end{align*}
Therefore, we see that
\begin{align*}
	\max_{(x,y)\in \Omega} M(x,y) =\dfrac{12(1272 +113\sqrt{678})}{1369}
\end{align*}
and it fllows from \eqref{eq-2.5} that
\begin{align}\label{eq-2.6}
	|H_{2,2}(F_{f}/2)|\leq \dfrac{(1272 +113\sqrt{678})}{32856}.
\end{align}	

To prove the equality in \eqref{eq-2.6} sharp, for 
\begin{align*}
	A=8696056 &+741393\sqrt{678} \\&+ 81\sqrt{11507173440 + 1965308656 \sqrt{678} + 83777409 (\sqrt{678})^2},
\end{align*}
we consider the following function
\begin{align*}
	f_{1}(z)&=\dfrac{z}{1 -\left(\sqrt{\frac{8}{27} +\frac{32\times 74^{2/3}}{27A^{1/3}} +\frac{2^{1/3}A^{1/3}}{27\times 37^{2/3}}}\right)z}\\&=z + \left(\sqrt{\frac{8}{27} +\frac{32\times 74^{2/3}}{27A^{1/3}} +\frac{2^{1/3}A^{1/3}}{27\times 37^{2/3}}}\right)z^2 \\&\quad+ \left(\sqrt{\frac{8}{27} +\frac{32\times 74^{2/3}}{27A^{1/3}} +\frac{2^{1/3}A^{1/3}}{27\times 37^{2/3}}}\right)^2 z^3\\&\quad+ \left(\sqrt{\frac{8}{27} +\frac{32\times 74^{2/3}}{27A^{1/3}} +\frac{2^{1/3}A^{1/3}}{27\times 37^{2/3}}}\right)^3 z^4\\&\quad+ \left(\sqrt{\frac{8}{27} +\frac{32\times 74^{2/3}}{27A^{1/3}} +\frac{2^{1/3}A^{1/3}}{27\times 37^{2/3}}}\right)^4 z^5+\ldots, \;\;z\in\mathbb{D};
\end{align*}
It can be easily shown that the function $ f_{1}\in \mathcal{S}^{*}_S $. A simple but tedious computations show that $ |H_{2,2}(F_{f_{1}}/2)| =|\gamma_{2}\gamma_{4} -\gamma^2_{3}|=(1272 + 113\sqrt{678})/ 32856 $, where
\begin{align*}
	\begin{cases}
		\gamma_{2}=\dfrac{1}{4}\left(\sqrt{\frac{8}{27} +\frac{32\times 74^{2/3}}{27A^{1/3}} +\frac{2^{1/3}A^{1/3}}{27\times 37^{2/3}}} \right)^2\vspace{1.5mm}\\
		\gamma_{3}=\dfrac{1}{6}\left(\sqrt{\frac{8}{27} +\frac{32\times 74^{2/3}}{27A^{1/3}} +\frac{2^{1/3}A^{1/3}}{27\times 37^{2/3}}}\right)^3\vspace{1.5mm}\\
		\gamma_{4}=\dfrac{1}{8}\left(\sqrt{\frac{8}{27} +\frac{32\times 74^{2/3}}{27A^{1/3}} +\frac{2^{1/3}A^{1/3}}{27\times 37^{2/3}}}\right)^4.
	\end{cases}
\end{align*}
Hence equality hods in \eqref{eq-2.6}. This completes the proof
\end{proof}
An example is presented below to demonstrate that the strict inequality in Theorem \ref{th-2.1} remains valid.
\begin{example}
Consider the function
\begin{align*}
	f_2(z)=\dfrac{z}{1 -z^2}=z +z^3 +z^5 +z^7 +\ldots
\end{align*}
It is easy to see that 
\begin{align*}
	\mbox{Re}\left(\dfrac{zf^{\prime}_{2}(z)}{f_{2}(z)-f_{2}(-z)}\right) =\dfrac{1}{2}\mbox{Re}\left(\dfrac{1+z^2}{1-z^2}\right)> 0.
\end{align*}
Hence, the function $ f_{2}\in\mathcal{S}^{*}_S $. We can easily compute and find that
\begin{align*}
	|H_{2,2}(F_{f_{2}}/2)|=\dfrac{1}{8}< \dfrac{(1272 +113\sqrt{678})}{32856}.
\end{align*}
\end{example}

\subsection{\bf Second Hankel determinant of logarithmic coefficients of $f\in \mathcal{K}_S$:}
We obtain the following sharp bound of $ H_{2,2}(F_{f}/2) $ for functions in the class $ \mathcal{K}_S $. 
\begin{thm}\label{th-2.3}
Let $ f\in \mathcal{K}_S $. Then
\begin{align*}
	|H_{2,2}(F_{f}/2)|\leq\dfrac{13}{1080}.
\end{align*}
The inequality is sharp.	
\end{thm}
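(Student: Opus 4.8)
The plan is to follow the proof of Theorem \ref{th-2.1} step by step, adapting each stage to the class $\mathcal{K}_S$. First I would record the subordination defining $\mathcal{K}_S$: there is a Schwarz function $w(z)=\sum_{n\ge 1}c_nz^n$ with
\[
\frac{2\,(zf'(z))'}{(f(z)-f(-z))'}=\frac{1+w(z)}{1-w(z)}.
\]
Expanding both sides and equating the coefficients of $z,z^2,z^3,z^4$ gives the analogues of \eqref{eq-2.2}, namely
\[
a_2=\tfrac12 c_1,\quad a_3=\tfrac13(c_2+c_1^2),\quad a_4=\tfrac18(c_3+3c_1c_2+2c_1^3),\quad a_5=\tfrac1{10}(c_4+2c_2^2+5c_1^2c_2+2c_1c_3+2c_1^4).
\]
These are precisely the starlike relations \eqref{eq-2.2} rescaled by the factor $1/n$ in the $n$-th coefficient, reflecting the Alexander-type link between $\mathcal{K}_S$ and $\mathcal{S}^*_S$.

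Next I would substitute these expressions into $H_{2,2}(F_f/2)=\tfrac1{288}(a_2^6-6a_2^4a_3-12a_2^3a_4+\cdots+72a_3a_5)$ to express $288\,H_{2,2}(F_f/2)$ as a polynomial in $c_1,\dots,c_4$. Unlike the starlike case, the $c_1c_2c_3$ monomial no longer cancels (it has coefficient $-\tfrac{9}{20}$), so the result genuinely differs from \eqref{eq-2.3} and this term must be tracked carefully. Applying the triangle inequality together with Lemma \ref{lem-2.1} — using $|c_3|\le 1-x^2-\tfrac{y^2}{1+x}$ for the terms linear in $c_3$, its square for the $c_3^2$ term, and $|c_4|\le 1-x^2-y^2$ for the terms involving $c_4$ — yields $288\,|H_{2,2}(F_f/2)|\le M(x,y)$ for an explicit function $M$ on $\Omega=\{0\le x\le 1,\ 0\le y\le 1-x^2\}$, with $x=|c_1|$ and $y=|c_2|$.

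The heart of the argument is the maximization of $M$ over $\Omega$, and the target is $\max_{\Omega}M=\tfrac{52}{15}$, since $\tfrac1{288}\cdot\tfrac{52}{15}=\tfrac{13}{1080}$. As in Theorem \ref{th-2.1} I would first show that $\nabla M=0$ has no interior solution, so the maximum lies on $\partial\Omega$, and then examine the three boundary arcs. On $x=0$ the restriction $M(0,y)=\tfrac98 y^4+\tfrac{16}{15}y^3-\tfrac94 y^2+\tfrac{12}{5}y+\tfrac98$ is increasing to the value $\tfrac{52}{15}$ at $y=1$; on $y=0$ the values stay well below $\tfrac{52}{15}$; and on the curved arc $y=1-x^2$, writing $t=x^2$ and using $1-x^2-\tfrac{y^2}{1+x}=x(1-x^2)$, the function collapses to $M=\tfrac{52}{15}-\tfrac{11}{20}t-5t^2+\tfrac{145}{64}t^3$, whose excess over $\tfrac{52}{15}$ factors as $t\bigl(\tfrac{145}{64}t^2-5t-\tfrac{11}{20}\bigr)\le 0$ on $[0,1]$ (the quadratic is convex and negative at both endpoints). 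Thus the maximum $\tfrac{52}{15}$ is attained only at the corner $(x,y)=(0,1)$, giving $|H_{2,2}(F_f/2)|\le\tfrac{13}{1080}$.

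I expect the main obstacle to be exactly this interior/boundary analysis, in particular ruling out interior critical points of $M$ exceeding $\tfrac{52}{15}$: the partial derivatives are unwieldy because of the $\tfrac{y^2}{1+x}$ denominators inherited from Lemma \ref{lem-2.1}. For sharpness I would exhibit the extremal function corresponding to the data $c_2=1$, $c_1=c_3=c_4=0$ (that is, $w(z)=z^2$), namely
\[
f(z)=\tfrac12\log\frac{1+z}{1-z}=\operatorname{arctanh}z=z+\tfrac13 z^3+\tfrac15 z^5+\cdots.
\]
A direct computation gives $(zf'(z))'/(f(z)-f(-z))'=\tfrac{1+z^2}{2(1-z^2)}$, which has positive real part, so $f\in\mathcal{K}_S$; and from $\gamma_2=\tfrac16$, $\gamma_3=0$, $\gamma_4=\tfrac{13}{180}$ one obtains $H_{2,2}(F_f/2)=\gamma_2\gamma_4-\gamma_3^2=\tfrac{13}{1080}$, so the bound is sharp.
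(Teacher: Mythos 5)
Your derivation of the upper bound follows the paper's proof essentially line for line, up to an overall rescaling: you normalize by $288$, so your $M$ is the paper's $N$ divided by $960$. The subordination \eqref{eq-2.7}, the coefficient relations \eqref{eq-2.8}, the polynomial \eqref{eq-2.9} (your coefficient $-\tfrac{9}{20}$ for $c_1c_2c_3$ is exactly the paper's $-432/960$, and you are right that this term, absent in the starlike case, must be tracked), the application of Lemma \ref{lem-2.1}, and the boundary analysis on $\Omega$ all coincide: your target $\tfrac{52}{15}=3328/960$, your $M(0,y)$, and your on-arc reduction $M(x,1-x^2)=\tfrac{145}{64}t^3-5t^2-\tfrac{11}{20}t+\tfrac{52}{15}$ (using the correct identity $1-x^2-\tfrac{(1-x^2)^2}{1+x}=x(1-x^2)$) match $N(0,y)=1080y^4+1024y^3-2160y^2+2304y+1080$ and $N(x,1-x^2)=2175x^6-4800x^4-528x^2+3328$ after division by $960$, and your factorization argument on the arc is sound. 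One shared soft spot: like the paper, you assert rather than verify that $\nabla M=0$ has no interior solution; you at least flag this as the main obstacle, but neither argument carries it out in detail.

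Where you genuinely diverge is the sharpness part, and there your route is not only different but correct where the paper's is defective. Your extremal function $f(z)=\operatorname{arctanh} z$ corresponds precisely to the maximizer $(x,y)=(0,1)$, i.e.\ $w(z)=z^2$; your membership check $(zf'(z))'/(f(z)-f(-z))'=(1+z^2)/(2(1-z^2))$ and the values $\gamma_2=\tfrac16$, $\gamma_3=0$, $\gamma_4=\tfrac{13}{180}$, giving $H_{2,2}(F_f/2)=\tfrac{13}{1080}$, all verify. The paper instead exhibits $g_1(z)=-\tfrac1B\log(1-Bz)$ with $B=2(2/5)^{1/3}(13/7)^{1/6}$; but $B^6=3328/175>1$ forces $B\approx 1.63>1$, so $g_1$ has a logarithmic singularity at $z=1/B\approx 0.61$ inside $\mathbb{D}$ and is not even in $\mathcal{A}$, let alone $\mathcal{K}_S$ (equivalently, its Schwarz data would require $c_1=B>1$, violating Lemma \ref{lem-2.1}). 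So your example, besides being simpler, actually repairs the paper's sharpness argument, which as written does not establish equality in \eqref{eq-2.12}.
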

\begin{proof}
Let $ f\in \mathcal{K}_S $ be of the form $ f(z)=z +\sum_{n=2}^{\infty} a_n z^n $, $ z\in \mathbb{D} $. Then by the definition of subordination there exits a Schwarz function $ w(z)=\sum_{n=1}^{\infty} c_n z^n $ such that
\begin{align}\label{eq-2.7}
	\dfrac{2(zf^{\prime}(z))^{\prime}}{(f(z)-f(-z))^{\prime}} =\dfrac{1 +w(z)}{1-w(z)}.
\end{align}
Comparing the coefficients on both side of \eqref{eq-2.7}, we obtain
\begin{align}\label{eq-2.8}
	\begin{cases}
		a_{2} =\dfrac{1}{2}c_1 ,\vspace{1.5mm}\\ a_{3}= \dfrac{1}{3}(c_2 + c^2_1),\vspace{1.5mm}\\ a_{4}=\dfrac{1}{8}(c_3 +3c_1 c_2 +2c^3_1),\vspace{1.5mm}\\ a_{5}=\dfrac{1}{10}(c_4 + 2c^2_2 +5c^2_1 c_2 +2c_1 c_3 +2c^4_1).
	\end{cases}
\end{align}
By utilizing \eqref{Leq-1.1} and \eqref{eq-2.8}, an easy calculation gives that
\begin{align}\label{eq-2.9}
	\nonumber H_{2,2}(F_{f}/2)&=\gamma_{2}\gamma_{4} -\gamma^2_{3} \\ &\nonumber=\dfrac{1}{288} \left(a^6_2 -36a^3_3 -72a^2_4 -6a^4_2 a_3 -12a^3_2 a_4 +72a_2 a_3 a_4 \right.\\&\nonumber\left. \quad+18a^2_2 a^2_3 -36a^2_2 a_5 +72a_3 a_5\right)\\&\nonumber =\dfrac{1}{276480} \left(175c^6_1 + 2508c^4_1 c_2 -180c^3_1 c_3 -432c_1 c_2 c_3 +5640c^2_1 c^2_2 \right.\\&\left. \quad+1440c^2_1 c_4 +3328 c^3_2 -1080c^2_3 +2304c_2 c_4\right).
\end{align}
Applying the triangle inequality to \eqref{eq-2.9} and using Lemma \ref{lem-2.1}, we obtain
\begin{align}\label{eq-2.10}
   \nonumber 276480|H_{2,2}(F_{f}/2)|&\leq 175|c_{1}|^6 +2508|c_{1}|^4|c_{2}|+ 180|c_{1}|^3\left(1-|c_1|^2 -\dfrac{|c_2|^2}{1+|c_1|}\right) \\&\nonumber \quad+432|c_{1}||c_{2}|\left(1-|c_1|^2 -\dfrac{|c_2|^2} {1+|c_1|}\right) +5640 |c_1|^2|c_2|^2 \\&\nonumber \quad+ 1440|c_1|^2 (1-|c_1|^2-|c_2|^2) +3328|c_2|^3 \\& \quad+2304|c_2|(1-|c_1|^2-|c_2|^2) +1080 \left(1-|c_1|^2 -\dfrac{|c_2|^2}{1+|c_1|}\right)^2.
\end{align}
Suppose that $ x=|c_{1}| $ and $ y=|c_{2}| $. Then it follows from \eqref{eq-2.10} that
\begin{align}\label{eq-2.11}
	276480|H_{2,2}(F_{f}/2)|\leq N(x,y),
\end{align}
where 
\begin{align*}
	N(x,y)&:=175x^6 +2508x^4y+ 180x^3 \left(1-x^2 -\dfrac{y^2}{1+x}\right)
	 +5640 x^2y^2\\& \quad+432xy\left(1-x^2 -\dfrac{y^2} {1+x}\right) + 1440x^2 (1-x^2-y^2) +3328y^3 \\& \quad+2304y(1-x^2-y^2) +1080 \left(1-x^2 -\dfrac{y^2}{1+x}\right)^2.
\end{align*}
Lemma \ref{lem-2.1} reveals that the variability range for a given pair $ (x,y) $ coincides with the set $\Omega=\{(x,y):0\leq x\leq 1, 0\leq y\leq 1-x^2\}$. Given this information, we proceed to find the maximum value of $ N(x,y) $ within the region $ \Omega $. Therefore, the critical point of $ N(x,y) $ satisfies the conditions
\begin{align*}
	\dfrac{\partial N}{\partial x}&= 1050x^5 -900x^4 -1440x^3 +540x^2 -1440x -1296x^2 y \\&\quad+10032x^3 y +8400xy^2 -4608xy +432y + \dfrac{108y^2(40x-5x^2 -4y)}{(1+x)}\\&\quad +\dfrac{36y^2(5x^3 +12xy -60x^2 +60)}{(1+x)^2} -\dfrac{2160y^4}{(1+x)^3}
\end{align*}
and
\begin{align*}
	\dfrac{\partial N}{\partial y}&=2508x^4 -432x^3 -2304x^2 +432x +2304 +3072y^2 + 8400x^2 y \\&\quad + \dfrac{72y(60x^2 -18xy -5x^3 -60)}{(1+x)} +\dfrac{4320y^3}{(1+x)^2}.
\end{align*}
By applying the analogous reasoning as in the proof of Theorem \ref{th-2.1}, we establish that the maximum of $N(x,y)$ is obtained on the boundary of $\Omega$. Specifically, on the boundary, it can be seen that
\begin{align*}
	&N(x,0)= 175x^6 -180x^5 -360x^4 +180x^3 -720x^2 +1080 \leq 1080 \;\;\mbox{for}\; 0\leq x\leq 1,\\& N(0,y)=1080y^4 +1024y^3 -2160y^2 +2304y +1080 \leq 3328 \;\;\mbox{for}\; 0\leq y\leq 1
\end{align*} 
and
\begin{align*}
	N(x,1-x^2)= 2175x^6 -4800x^4 -528x^2 +3328 \leq 3328 \;\;\mbox{for}\; 0\leq x\leq 1.
\end{align*}
Therefore, it is clear that
\begin{align*}
	\max_{(x,y)\in \Omega} N(x,y) =3328
\end{align*}
and from \eqref{eq-2.11}, we easily obtain
\begin{align}\label{eq-2.12}
	|H_{2,2}(F_{f}/2)|\leq \dfrac{13}{1080}.
\end{align}

In order to show the equality in \eqref{eq-2.12} sharp, we consider the function
\begin{align*}
	g_{1}(z)&=-\dfrac{1}{2\left(\frac{2}{5}\right)^{1/3}\left(\frac{13}{7}\right)^{1/6}}\log\left(1 -\left(2\left(\frac{2}{5}\right)^{1/3} \left(\frac{13}{7}\right)^{1/6}\right)z\right)\\&=z + \dfrac{1}{2}\left(2\left(\frac{2}{5}\right)^{1/3} \left(\frac{13}{7} \right)^{1/6}\right)z^2 + \dfrac{1}{3}\left(2\left(\frac{2}{5} \right)^{1/3} \left(\frac{13}{7}\right)^{1/6}\right)^2 z^3 \\&\quad+ \dfrac{1}{4}\left(2\left(\frac{2}{5}\right)^{1/3} \left(\frac{13}{7} \right)^{1/6}\right)^3 z^4 +\dfrac{1}{5}\left(2\left(\frac{2}{5} \right)^{1/3} \left(\frac{13}{7}\right)^{1/6}\right)^4 z^5 +\ldots, \;\;z\in\mathbb{D}.
\end{align*}
It can be shown that the function $ g_{1} $ belongs to the class $ \mathcal{K}_S $. Further, an easy computation shows that $ |H_{2,2}(F_{g_{1}}/2)| =|\gamma_{2}\gamma_{4}-\gamma^2_{3}|=13/1080 $, where
\begin{align*}
	\begin{cases}
		\gamma_{2}=\dfrac{5}{48}\left(2\left(\frac{2}{5}\right)^{1/3} \left(\frac{13}{7}\right)^{1/6}\right)^2\vspace{1.5mm}\\
		\gamma_{3}=\dfrac{1}{16}\left(2\left(\frac{2}{5}\right)^{1/3} \left(\frac{13}{7}\right)^{1/6}\right)^3\vspace{1.5mm}\\
		\gamma_{4}=\dfrac{251}{5760}\left(2\left(\frac{2}{5}\right)^{1/3} \left(\frac{13}{7}\right)^{1/6}\right)^4.
	\end{cases}
\end{align*}
This shows that the equality holds in \eqref{eq-2.12}. This completes the proof.
\end{proof}
An example of a function in the class $\mathcal{K}_S$ is provided to demonstrate Theorem \ref{th-2.3}. The example satisfies the conditions of the theorem and shows that the inequality is strict also.
\begin{example}
Consider the function
\begin{align*}
	g_{2}(z)=-\log(1-z)= z+ \frac{1}{2}z^2 +\frac{1}{3}z^3 +\frac{1}{4} z^4 +\frac{1}{5}z^5 +\ldots, \;\;z\in\mathbb{D}.
\end{align*}
It is easy to see that 
\begin{align*}
	\mbox{Re}\left(\dfrac{(zg^{\prime}_{2}(z))^{\prime}}{(g_{2}(z)-g_{2}(-z))^{\prime}}\right) =\dfrac{1}{2}\mbox{Re}\left(\dfrac{1+z}{1-z}\right)> 0.
\end{align*}
Therefore, the function $ g_{2}\in\mathcal{K}_S $. A simple computation shows that
\begin{align*}
	|H_{2,2}(F_{g_{2}}/2)|=\dfrac{35}{55296}<\dfrac{13}{1080}.
\end{align*}
\end{example}

\end{document}